\def\be{\begin{equation}}
\def\ee{\end{equation}}
\def\ba{\begin{array}}
\def\ea{\end{array}}
\newtheorem{thm}{Theorem}[section]
\newtheorem{cor}[thm]{Corollary}
\newtheorem{lem}[thm]{Lemma}
\numberwithin{equation}{section}
\newcommand{\mi}{\mathbf{i}}
\def\be{\begin{equation}}
\def\ee{\end{equation}}
\def\br{\begin{eqnarray}}
\def\er{\end{eqnarray}}
\title{Birkhoff Normal Form for the Derivative Nonlinear Schr\"{o}dinger Equation}
\author{$\mbox{Jianjun \ Liu}$ \\
$\mbox{School of Mathematics, Sichuan University, Chengdu 610065, P R China}$}
\date{}
\begin{document}
\maketitle
{\bf Abstract}: This paper is concerned with the derivative nonlinear Schr\"{o}dinger equation with periodic boundary conditions. We obtain complete Birkhoff normal form of order six. As an application, the long time stability for solutions of small amplitude is proved.
\section{Introduction and Main Results}

In this paper, we consider the derivative nonlinear Schr\"{o}dinger
equation with periodic boundary conditions
\begin{equation}\label{20200523-2}
{\mi}u_t+u_{xx}+{\mi}(|u|^2u)_x=0,\hspace{12pt}x\in\mathbb{T}.
\end{equation}
The equation appears in plasma physics \cite{MioOMT76,Mj76,Sulem99}, and has infinitely many conservation laws \cite{KaupN78}.
%
%
Under the standard inner product on $L^2(\mathbb{T})$, (\ref{20200523-2}) can
be written in the form
\begin{equation}\label{20200523-3}
\frac{\partial{u}}{\partial{t}}=-\frac{d}{dx}\frac{\partial{H}}{\partial\bar{u}}
\end{equation}
with the real analytic Hamiltonian
\begin{equation}\label{20200523-4}
H=-\mi\int_\mathbb{T}u_x\bar{u}dx+\frac{1}{2}\int_\mathbb{T}|u|^4dx.
\end{equation}
Introduce for any $s\geq0$ the phase space
\begin{equation*}
\mathcal{H}^{s}_0=\{u\in{L}^2(\mathbb{T}):\hspace{6pt}\hat{u}(0)=0,\hspace{6pt}\|u\|^2_{s}=\sum_{j\in\mathbb{Z}_*}|\hat{u}(j)|^2|j|^{2s}<\infty\}
\end{equation*}
of complex valued functions on $\mathbb{T}$, where $\mathbb{Z}_*=\mathbb{Z}\setminus\{0\}$ and
\begin{equation*}
\hat{u}(j)=\int^{2\pi}_{0}u(x)e_{-j}(x)dx,\hspace{12pt}e_j(x)=\frac{1}{\sqrt{2\pi}}e^{{\mi}jx}.
\end{equation*}
To write (\ref{20200523-3}) in infinitely many coordinates, we make the
ansatz
\begin{equation*}
u(t,x)=\sum_{j\in\mathbb{Z}_*}q_j(t)e_j(x).
\end{equation*}
The coordinates are taken from the
Hilbert space $\ell^{2}_s$ of all complex-valued sequences
$q=(q_j)_{j\in\mathbb{Z}_*}$ with
\begin{equation*}
\|q\|_{s}^2=\sum_{j\in\mathbb{Z}_*}|q_j|^2|j|^{2s}<\infty.
\end{equation*}
Now (\ref{20200523-3}) can be rewritten as
\begin{equation}\label{20200523-5}
\dot{q_j}=-{\mi}j\frac{\partial{H}}{\partial\bar{q}_j}
\end{equation}
with the Hamiltonian
\begin{equation}\label{20200523-6}
H=\Lambda+G,
\end{equation}
where
\begin{equation}\label{20200523-7}
\Lambda=\sum_{j\in\mathbb{Z}_*}j|q_j|^2,
\end{equation}
\begin{equation}\label{20200523-8}
G=\frac{1}{4\pi}\sum_{\substack{j,k,l,m\in\mathbb{Z}_*\\j-k+l-m=0}}q_j\bar{q}_kq_l\bar{q}_m.
\end{equation}
In this paper the symplectic structure is
\begin{equation*}
-\mi\sum_{j\in\mathbb{Z}_*}jdq_j{\wedge}d\bar{q}_j,
\end{equation*}
the vector field for a Hamiltonian $F$ is
\begin{equation*}
X_F=-\mi\sum_{j\in\mathbb{Z}_*}j\Big(\frac{\partial{F}}{\partial\bar{q}_j}\frac{\partial}{\partial{q_j}}-\frac{\partial{F}}{\partial{q_j}}\frac{\partial}{\partial\bar{q}_j}\Big),
\end{equation*}
and the Poisson bracket for two Hamiltonian $H$, $F$ is
\begin{equation*}
\{H,F\}=-\mi\sum_{j\in\mathbb{Z}_*}j\Big(\frac{\partial{H}}{\partial{q_j}}\frac{\partial{F}}{\partial\bar{q}_j}-\frac{\partial{H}}{\partial\bar{q}_j}\frac{\partial{F}}{\partial{q_j}}\Big).
\end{equation*}

In \cite{G-L19,L-Y14}, in order to construct Cantor families of time quasi-periodic solutions for (\ref{20200523-2}) with higher order perturbations, a partial Birkhoff normal form of order four was calculated. In the following theorem, we transform the Hamiltonian (\ref{20200523-6}) into complete Birkhoff normal form of order four.
\begin{thm}\label{20200530-6}
There exists a real analytic symplectic coordinate transformation
$\Psi$ defined in a neighborhood of the origin of
$\ell^{2}_s$, $s>1$, which transforms the above Hamiltonian $H$ into complete Birkhoff normal form of order four. That is,
\begin{equation}\label{20200530-7}
H\circ\Psi=\Lambda+B+R,
\end{equation}
where $\Lambda$ is in (\ref{20200523-7}),
\begin{equation}\label{20200523-10}
B=-\frac{1}{4\pi}\sum_{j\in\mathbb{Z}_*}|q_j|^4+\frac{1}{2\pi}\Big(\sum_{j\in\mathbb{Z}_*}|q_j|^2\Big)^2,
\end{equation}
and $R$ is at least order 6 with
\begin{equation}\label{20200530-8}
\|X_R\|_{s-1}=O(\|q\|^5_{s}).
\end{equation}
Moreover, $R$ is of the form
\begin{equation*}\label{20200824-1}
R=\sum_{r\geq3}\sum_{(j_1,\cdots,j_{2r})\in\mathcal{M}_{r}}c(j_1,\cdots,j_{2r})q_{j_1}\bar{q}_{j_2}{\cdots}q_{j_{2r-1}}\bar{q}_{j_{2r}}
\end{equation*}
with its coefficients $c(j_1,\cdots,j_{2r})$ satisfying
\begin{equation}\label{20200824-2}
|c(j_1,\cdots,j_{2r})|<C^r\Big(\frac{j_2^*j_3^*{\cdots}j_{2r}^*}{j_1^*}\Big)^{\frac{1}{2}},
\end{equation}
where
\begin{equation*}\label{20200824-3}
\mathcal{M}_{r}:=\{(j_1,\cdots,j_{2r})\in\mathbb{Z}_*^{2r}:j_1-j_2+\cdots+j_{2r-1}-j_{2r}=0\}
\end{equation*}
is the zero momentum index set,
\begin{equation*}\label{20200824-4}
j_1^*{\geq}j_2^*\geq{\cdots}{\geq}j_{2r}^*
\end{equation*}
denotes the decreasing rearrangement of $\{|j_1|,|j_2|,\cdots,|j_{2r}|\}$, and $C>0$ is a constant.
\end{thm}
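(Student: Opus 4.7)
The plan is to carry out one step of Birkhoff normal form to remove the non-resonant quartic terms of $G$. First I compute the action of $\mathrm{ad}_\Lambda$ on a quartic monomial directly from the Poisson bracket formula:
\begin{equation*}
\{\Lambda,\, q_{j}\bar{q}_k q_l \bar{q}_m\} = \mi(j^2 - k^2 + l^2 - m^2)\, q_{j}\bar{q}_k q_l \bar{q}_m.
\end{equation*}
Under the momentum constraint $j - k + l - m = 0$ this eigenvalue factors algebraically as $2\mi(j-k)(j-m)$ and therefore vanishes precisely when $\{j,l\} = \{k, m\}$ as multisets. Collecting these resonant contributions in (\ref{20200523-8}) and correcting for the double counting along the diagonal $j=k=l=m$ recovers exactly the polynomial $B$ in (\ref{20200523-10}).

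Next I define the auxiliary quartic Hamiltonian
\begin{equation*}
F = \frac{1}{4\pi}\sum_{\substack{j-k+l-m=0\\ (j-k)(j-m)\neq 0}}\frac{q_j\bar{q}_k q_l\bar{q}_m}{2\mi(j-k)(j-m)},
\end{equation*}
which by construction solves the homological equation $\{\Lambda,F\} + (G - B) = 0$. Because the denominator is a non-zero integer and thus at least $2$ in absolute value, the coefficients of $F$ are bounded by those of $G$, so $X_F$ is real-analytic and tame on $\ell^2_s$ with $\|X_F\|_{s-1} = O(\|q\|^3_s)$. Setting $\Psi := \Phi^1_F$, the time-$1$ flow of $X_F$, yields a symplectic real-analytic map in a neighborhood of the origin, and the Lie expansion gives
\begin{equation*}
H\circ\Psi = \Lambda + G + \sum_{n\geq 1}\frac{1}{n!}\mathrm{ad}_F^n(\Lambda + G) = \Lambda + B + R,
\end{equation*}
where $R$ collects all Lie-bracket iterations; since $\{G,F\}$ is degree $6$ and each subsequent bracket with $F$ raises the degree by $2$, every term of $R$ has total degree at least $6$.

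The bulk of the work is the coefficient bound (\ref{20200824-2}). I would introduce the class $\mathcal{C}$ of zero-momentum polynomials whose degree-$2r$ part $\mathcal{C}_r$ satisfies $|c(j_1,\ldots,j_{2r})| \leq C^r (j_2^*\cdots j_{2r}^*/j_1^*)^{1/2}$, and then establish two closure properties: (i) $G, F \in \mathcal{C}_2$, since their coefficients are bounded constants and the momentum relation forces $j_1^*\leq 3\,j_2^*$, so the weight $(j_2^*j_3^*j_4^*/j_1^*)^{1/2}$ is bounded below by a positive constant; and (ii) the Poisson bracket maps $\mathcal{C}_p \times \mathcal{C}_q$ into $\mathcal{C}_{p+q-1}$ with a constant loss depending only on the total degree. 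Once these are in place, each $\mathrm{ad}_F^n(\Lambda + G)$ lies in $\mathcal{C}_{n+1}$ with constant $C^{n+1}$, and the factorials in the Lie series produce absolute convergence in a polydisk around $0 \in \ell^2_s$, yielding the coefficient bound for $R$.

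The Poisson bracket closure (ii) is the main obstacle. Each bracket contracts one index pair and inserts the factor $-\mi\ell$, where $\ell$ is the contracted index; this $|\ell|$ factor must be absorbed into the decreasing rearrangement of the surviving indices. I expect to handle this by splitting into cases according to whether $\ell$ becomes the new largest index, and in either case using the (extended) momentum constraint to bound $\ell$ by a sum of remaining indices and to recover the required weight $j_1^*$ in the denominator. The vector-field estimate (\ref{20200530-8}) is then a standard consequence of (\ref{20200824-2}): writing $(X_R)_j = -\mi j\, \partial R/\partial \bar{q}_j$ and applying weighted Cauchy--Schwarz in the dual variables, the factor $(j_2^*\cdots j_{2r}^*/j_1^*)^{1/2}$ combined with $s>1$ produces the bound $O(\|q\|_s^5)$.
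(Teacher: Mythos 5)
Your overall strategy coincides with the paper's: split $G$ into the resonant part $B$ and the non-resonant part $Q$, solve the homological equation with a quartic $F$, take $\Psi$ to be the time-$1$ flow of $X_F$, and control the remainder by an inductive coefficient bound that is stable under Poisson brackets. The algebra is right (the factorization $j^2-k^2+l^2-m^2=2(j-k)(j-m)$ under zero momentum, the identification of the resonant sum with $B$ by inclusion--exclusion), apart from a harmless sign: your $F$ solves $\{\Lambda,F\}=Q$ rather than $\{\Lambda,F\}+Q=0$, so it should be replaced by $-F$ to match (\ref{20200530-9}).

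The genuine gap is in the sentence where you bound the small divisor only by ``a non-zero integer, thus at least $2$,'' and conclude $\|X_F\|_{s-1}=O(\|q\|_s^3)$. That estimate loses one derivative, exactly as $X_Q$ does, and a derivative-losing vector field on $\ell^2_s$ has no standard flow theory: you cannot then assert that $\Psi=X_F^1$ is a well-defined real analytic symplectic map of a neighborhood of the origin of $\ell^2_s$, nor make sense of the compositions $\circ\, X_F^t$ appearing in the remainder, nor justify convergence of the Lie series. This is precisely the point of the paper's Lemma \ref{20200530-1}: for $(j,k,l,m)\in\Delta$ one has the much stronger lower bound (\ref{20200530-2}), $|j^2-k^2+l^2-m^2|\geq \sqrt{j_1^*}^3/(2\sqrt{j_2^*j_3^*j_4^*})$, which gives the coefficient decay (\ref{20200830-1}) for $F$ and hence the \emph{non}-derivative-losing estimates (\ref{20200830-2})--(\ref{20200830-3}), $\|X_F\|_{s}=O(\|q\|_s^3)$ and $\|DX_F\|_{\ell^2_s\to\ell^2_s}=O(\|q\|_s^2)$. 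These are what make the flow exist, be analytic and symplectic on $\ell^2_s$, and what feed into the estimate (\ref{20200530-8}) for $X_R$. You should prove this lemma (the proof exploits $j^2-k^2+l^2-m^2=-2(m-j)(m-l)=-2(m-j)(j-k)$ together with $2ab\geq a+b$ for positive integers) and use (\ref{20200830-1}) in place of the trivial bound. Your remaining sketch --- the bracket-closure property (ii), which you correctly flag as the main remaining work --- is the same induction the paper invokes to obtain (\ref{20200824-2}), and your observation that the constant-coefficient quartics lie in the class $\mathcal{C}_2$ because $j_1^*\leq 3j_2^*$ matches the paper's estimate (\ref{20200830-4}).
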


Then we eliminate all the resulting 6-order non normal form terms, and thus get complete Birkhoff normal form of order six in the following theorem.
\begin{thm}\label{20200823-2}
There exists a real analytic symplectic coordinate transformation
$\Phi$ defined in a neighborhood of the origin of
$\ell^{2}_s$, $s>3$, which transforms the above Hamiltonian $H\circ\Psi$ into complete Birkhoff normal form of order six. That is,
\begin{equation}\label{20200823-4}
H\circ\Psi\circ\Phi=\Lambda+B+K+\tilde{R},
\end{equation}
where $\Lambda$ is in (\ref{20200523-7}), $B$ is in (\ref{20200523-10}),
\begin{equation}\label{20200823-5}
K=-\frac{1}{8\pi^2}\sum_{j,k\in\mathbb{Z}_*,j{\neq}k}\frac{2j-k}{(j-k)^2}|q_j|^4|q_k|^2,
\end{equation}
and $\tilde{R}$ is at least order 8 with
\begin{equation}\label{20200823-6}
\|X_{\tilde{R}}\|_{s-1}=O(\|q\|^7_{s}).
\end{equation}
Moreover, $\tilde{R}$ is of the form
\begin{equation*}\label{20200824-5}
\tilde{R}=\sum_{r\geq4}\sum_{(j_1,\cdots,j_{2r})\in\mathcal{M}_{r}}\tilde{c}(j_1,\cdots,j_{2r})q_{j_1}\bar{q}_{j_2}{\cdots}q_{j_{2r-1}}\bar{q}_{j_{2r}}
\end{equation*}
with its coefficients $\tilde{c}(j_1,\cdots,j_{2r})$ satisfying
\begin{equation}\label{20200824-6}
|\tilde{c}(j_1,\cdots,j_{2r})|<\tilde{C}^r\Big(\frac{j_2^*j_3^*{\cdots}j_{2r}^*}{j_1^*}\Big)^{\frac{5}{2}},
\end{equation}
where $\tilde{C}>0$ is a constant.
\end{thm}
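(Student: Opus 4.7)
The plan is to perform one more step of Birkhoff normal form on the Hamiltonian $H\circ\Psi=\Lambda+B+R$ given by Theorem~\ref{20200530-6}. Decompose $R=R_6+R_{\geq 8}$ into homogeneous parts; the coefficient bound \eqref{20200824-2} with $r=3$ controls $R_6$. A direct computation gives
\[
\{\Lambda,\,q_{j_1}\bar{q}_{j_2}q_{j_3}\bar{q}_{j_4}q_{j_5}\bar{q}_{j_6}\}=\mi\,\Omega_{\mathbf{j}}\,q_{j_1}\bar{q}_{j_2}q_{j_3}\bar{q}_{j_4}q_{j_5}\bar{q}_{j_6},\qquad\Omega_{\mathbf{j}}:=(j_1^2+j_3^2+j_5^2)-(j_2^2+j_4^2+j_6^2),
\]
so a monomial in $\mathcal{M}_3$ is resonant iff $\Omega_{\mathbf{j}}=0$. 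Because $\Omega_{\mathbf{j}}\in\mathbb{Z}$, the cohomological equation below is free of any small-divisor issue: $|\Omega_{\mathbf{j}}|\geq 1$ on non-resonant indices.

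The central task is to prove that the projection $\Pi R_6$ of $R_6$ onto resonant monomials equals the explicit Hamiltonian $K$ of \eqref{20200823-5}. For this one retraces the construction of $\Psi$ in Theorem~\ref{20200530-6}: the order-$6$ part of $H\circ\Psi$ has the closed form $R_6=\tfrac12\{F_4,\,G+B\}$, where $F_4$ is the degree-$4$ generating function of $\Psi$ (its coefficients being those of $G$ divided by the degree-$4$ analogue of $\Omega_{\mathbf{j}}$). A symbolic expansion of this Poisson bracket followed by extraction of the tuples in $\mathcal{M}_3$ with $\Omega_{\mathbf{j}}=0$ yields an explicit double sum. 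Tuples for which the multisets $\{j_1,j_3,j_5\}$ and $\{j_2,j_4,j_6\}$ coincide produce action-only monomials $|q_j|^4|q_k|^2$ whose coefficients, after simplification of the denominators carried over from $F_4$, collapse to exactly $-\tfrac{1}{8\pi^2}\cdot\tfrac{2j-k}{(j-k)^2}$; the remaining non-trivially resonant six-index tuples (distinct multisets with equal sum and equal sum of squares) must cancel identically in the symmetrized bracket $\{F_4,G+B\}$. This combinatorial identification is the main obstacle of the whole proof.

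With $\Pi R_6=K$ in hand, define $F_6$ as the solution of $\{\Lambda,F_6\}=R_6-K$ obtained by dividing each non-resonant coefficient of $R_6-K$ by $\mi\Omega_{\mathbf{j}}$; its coefficients inherit an estimate essentially of the same form as those of $R_6$. Set $\Phi:=\exp(X_{F_6})$; since $X_{F_6}=O(\|q\|^5_s)$, $\Phi$ is a real analytic symplectomorphism on a neighborhood of the origin of $\ell^2_s$. Using the Lie-series expansion
\[
H\circ\Psi\circ\Phi=\sum_{k\geq 0}\frac{1}{k!}\,\mathrm{ad}_{F_6}^{k}(\Lambda+B+R_6+R_{\geq 8})
\]
together with the defining identity $\{F_6,\Lambda\}=K-R_6$, one obtains $\Lambda+B+K+\tilde{R}$ with $\tilde{R}=R_{\geq 8}+\{F_6,B\}+\tfrac12\{F_6,R_6+K\}+\cdots$. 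Every contribution preserves zero momentum and has total degree at least $8$; the estimate \eqref{20200824-6} then follows by routine bookkeeping on how bounds of the form $(j_2^*\cdots j_{2r}^*/j_1^*)^{\alpha}$ combine under Poisson brackets of zero-momentum symbols. The jump of the exponent from $\tfrac12$ to $\tfrac52$ reflects the extra $j^*$ factors introduced by each bracket with the degree-$6$ generator $F_6$, and is precisely what forces the hypothesis $s>3$ needed to convert \eqref{20200824-6} into the vector-field bound \eqref{20200823-6}.
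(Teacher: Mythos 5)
Your outline follows the paper's overall architecture (split $R=R_6+R_{\geq8}$, identify the resonant normal form $K$, solve a cohomological equation for a degree-six generator, estimate the remainder), but it has two genuine gaps, one of which is the central analytic difficulty of the theorem.

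First, and most seriously, your claim that the cohomological equation is ``free of any small-divisor issue'' because $|\Omega_{\mathbf j}|\geq1$, so that $F_6$ ``inherits an estimate essentially of the same form as those of $R_6$,'' does not work here. The symplectic structure carries a factor of $j$, so a Hamiltonian whose coefficients are merely bounded by $(j_1^*)^{-\frac12}(j_2^*\cdots j_6^*)^{\frac12}$ (the bound \eqref{20200824-2} satisfied by $R_6$) has a vector field mapping $\ell^2_s$ only into $\ell^2_{s-1}$; if $F_6$ satisfies only the same bound, $X_{F_6}$ is unbounded and $\Phi=\exp(X_{F_6})$ is not a transformation of $\ell^2_s$, so the Lie-series argument collapses. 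The generator must gain decay in $j_1^*$ relative to $R_6$, and the trivial bound \eqref{20200918-1} provides none. The paper resolves this with Lemma \ref{20200830-5}, which gives the much stronger lower bound \eqref{20200830-7} on the divisor except in the case $j_1=j_2$, $j_1^*>100{j_3^*}^2$; in that exceptional case no divisor estimate can help, and instead one computes the relevant piece $\tilde Q_0$ of $R_6$ explicitly, as in \eqref{20200827-3}, and observes the cancellation \eqref{20200830-16} showing its coefficients are $O(1/j_1^*)$, which makes $X_{\tilde Q_0}$ bounded so that $|\Omega_{\mathbf j}|\geq1$ then suffices. Your proposal contains neither ingredient, and without them the transformation $\Phi$ and the bound \eqref{20200823-6} are not justified. (Relatedly, the exponent $\frac52$ in \eqref{20200824-6} comes from dividing $(j_2^*\cdots j_6^*)^{\frac12}$ by the right-hand side of \eqref{20200830-7}, not from ``extra $j^*$ factors introduced by each bracket.'')

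Second, you correctly identify that the non-trivially resonant tuples (distinct index multisets with equal sum and equal sum of squares) must cancel, and you correctly flag this as a main obstacle, but you give no argument for it. The paper proves this vanishing, $\tilde K=0$ in \eqref{20200825-10}, via the algebraic identity \eqref{20200826-8} of Lemma \ref{20200826-3}, whose proof requires a translation reduction and explicit symmetric-function computations; it is not a routine symmetrization and cannot be left as ``must cancel identically.''
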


Compared with Theorem \ref{20200530-6}, the index $s$ of phase space in Theorem \ref{20200823-2} becomes more restrictive, and the bound of coefficients becomes larger. Thus, although Theorem \ref{20200530-6} is an intermediate step of Theorem \ref{20200823-2}, we still write them separately. In this way, we could try to study complete Birkhoff normal form of higher order. However, it becomes more and more complicated. On the other hand, although there are infinitely many conservation laws for the equation (\ref{20200523-2}), we don't know whether there is a global Birkhoff normal form as KdV equation in \cite{KapP03} and nonlinear Schr\"{o}dinger equation in \cite{GreK14}.

An application of Birkhoff normal form is to extract parameters by amplitude-frequency modulation in KAM theory. In this aspect, \cite{KukP96} is the pioneer work by Kuksin and P\"{o}schel, where Birkhoff normal form of order four is introduced to study nonlinear Schr\"{o}dinger equation. As mentioned above,  in \cite{G-L19,L-Y14} a partial Birkhoff normal form of order four was used to construct quasi-periodic solutions. By contrast, the complete Birkhoff normal form of order four or six is stronger, in the sense that frequencies are more twisted about parameters. For instance, it may help to remove some restrictions of selection of tangential frequencies in \cite{G-L19}.

Another application is to study long time stability of solutions of small amplitude. Actually, by Theorem \ref{20200530-6} and Theorem \ref{20200823-2}, we have the following corollary.
\begin{cor}\label{20200823-3}
For any $s>2$ there exists $\epsilon_{s}>0$ and $C_s>0$ such that if the
initial datum $u_0$ belongs to $\mathcal{H}^{s}_0$ and fulfills
\begin{equation*}\label{20200829-1}
\epsilon:=\|u_0\|_{s}\leq\epsilon_{s},
\end{equation*}
then the solution $u(t)$ of (\ref{20200523-2}) satisfies
\begin{equation}\label{20200829-2}
\|u(t)\|_{s}\leq{C_s}\epsilon
\end{equation}
for $|t|\leq\epsilon^{-r_s}$, where $r_s=4$ for $2<s\leq4$ and $r_s=6$ for $s>4$.
\end{cor}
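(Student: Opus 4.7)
The plan is to apply the symplectic change of coordinates from Theorem~\ref{20200530-6} (for $2<s\leq 4$) or from Theorems~\ref{20200530-6} and~\ref{20200823-2} (for $s>4$), exploit the fact that the resulting normal-form part has the weighted super-action $\|z\|_s^2$ as an integral, and then bootstrap on the scalar differential inequality satisfied by $\|z\|_s^2$ in the new coordinates.

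Write $\Upsilon=\Psi$ in the first case and $\Upsilon=\Psi\circ\Phi$ in the second, and set $z(t):=\Upsilon^{-1}(u(t))$. Then $z(t)$ is governed by the Hamiltonian vector field of $H\circ\Upsilon$, which by the cited theorems equals $\Lambda+B+R$ or $\Lambda+B+K+\tilde R$. The decisive observation is that $\Lambda$, $B$ and $K$ are all polynomials in the actions $I_j=|z_j|^2$ alone, so they Poisson-commute with
\begin{equation*}
N_s(z):=\|z\|_s^2=\sum_{j\in\mathbb{Z}_*}|j|^{2s}|z_j|^2.
\end{equation*}
Consequently, along the trajectory one has
\begin{equation*}
\frac{d}{dt}N_s(z(t))=\{N_s,R_*\}(z(t)),\qquad R_*\in\{R,\tilde R\}.
\end{equation*}

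The core analytic step is to establish the bracket bounds
\begin{equation*}
|\{N_s,R\}(z)|\leq C\|z\|_s^6\ \mbox{for}\ s>2,\qquad |\{N_s,\tilde R\}(z)|\leq C\|z\|_s^8\ \mbox{for}\ s>4.
\end{equation*}
These I would derive by expanding $\{N_s,R_*\}$ monomial by monomial on the explicit series given in Theorems~\ref{20200530-6} and~\ref{20200823-2}, and bounding the resulting sums against the coefficient estimates (\ref{20200824-2}) and (\ref{20200824-6}), using the zero momentum restriction $j_1-j_2+\cdots+j_{2r-1}-j_{2r}=0$ to dominate the largest index $j_1^*$ by the sum of the others. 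A repeated application of the Sobolev algebra inequality $\|fg\|_\sigma\leq C\|f\|_\sigma\|g\|_\sigma$ (valid for $\sigma>\frac{1}{2}$) then converts products of Fourier indices into powers of $\|z\|_s$. The thresholds $s>2$ and $s>4$ are exactly one derivative above the thresholds $s>1$ and $s>3$ of the normal-form theorems: the extra derivative arises because the symplectic structure contains the unbounded factor $j$, so that $\{N_s,R_*\}$ is one order more singular than the vector field bounds~(\ref{20200530-8}) and~(\ref{20200823-6}).

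Given these estimates, the conclusion follows from a standard continuation argument for scalar ODEs. Since $\Upsilon$ is close to the identity near the origin of $\ell^{2}_s$, one has $N_s(z(0))\leq 2\epsilon^2$, and as long as $\|z(t)\|_s\leq 2\epsilon$ also $\|u(t)\|_s\leq 2\|z(t)\|_s$. In the case $s>4$ the differential inequality $\dot N_s\leq CN_s^4$ yields $N_s(z(t))\leq 2N_s(z(0))$ for $|t|\leq c\epsilon^{-6}$, and the bootstrap closes to give $\|u(t)\|_s\leq C_s\epsilon$ on that interval; the case $2<s\leq 4$ is identical, with $\dot N_s\leq CN_s^3$ and the longer-accessible time scale $\epsilon^{-4}$. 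The principal difficulty is the pair of bracket estimates above; once they are in hand, the remainder of the argument is routine. These estimates rely in an essential way on both the coefficient bounds and the zero momentum condition, and the derivative lost in passing from the vector field bound to the super-action bracket is precisely what accounts for the gap between the regularity thresholds of Corollary~\ref{20200823-3} and those of the normal-form theorems.
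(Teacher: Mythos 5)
Your proposal is correct and follows essentially the same route as the paper: transform by $\Psi$ (resp.\ $\Psi\circ\Phi$), use that $\Lambda$, $B$, $K$ depend only on the actions so that $\frac{d}{dt}\|q\|_s^2=\{\|q\|_s^2,R_*\}$, bound this bracket by $C\|q\|_s^6$ (resp.\ $C\|q\|_s^8$) from the coefficient estimates together with the zero-momentum condition, and close with a standard bootstrap. The paper formalizes your ``dominate $j_1^*$ by the others'' step as Lemma~\ref{20200830-18}, which exploits the near-cancellation of the two largest terms of $j_1|j_1|^{2s}-j_2|j_2|^{2s}+\cdots$ to obtain $|\Omega_s|\leq(2s+1)(2r)^{s+2}{j_1^*}^s{j_2^*}^sj_3^*$; that cancellation is precisely what realizes the one-derivative loss (rather than a loss of order $s+\tfrac12$) underlying your thresholds $s>2$ and $s>4$.
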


Birkhoff normal form for long time stability of solutions of Hamiltonian partial differential equations has been widely investigated by many authors. For the case with bounded nonlinear vector field, see \cite{Bam03,Bam07,BamDGS07,BamG06,Bou00,C-L-Y16,FaouG13,GreIP09} for example; for the case with unbounded nonlinear vector field, see \cite{BertiD18,Del15,Y-Z14} for example. More recently, for the purpose to study long time stability without external parameters, rational normal form is introduced for nonlinear Schr\"{o}dinger equation in \cite{BernierFG18} and for generalized KdV and Benjamin-Ono equations in \cite{BernierG20}, where a key ingredient is to extract parameters by Birkhoff normal form of order six. So I think Theorem \ref{20200823-2} may be useful to study long time stability for more general derivative nonlinear Schr\"{o}dinger equation
\begin{equation}
{\mi}u_t+u_{xx}+{\mi}\Big(f(|u|^2)u\Big)_x=0,\hspace{12pt}x\in\mathbb{T},
\end{equation}
where $f(z)$ is a real analytic function with $f(0)=0$ and $f'(0)\neq0$.

In Section 2, we prove Theorem \ref{20200530-6}, Theorem \ref{20200823-2} and Corollary \ref{20200823-3}. For the proof of Theorem \ref{20200530-6}, we first split the 4-order Hamiltonian $G$ into its normal form part $B$ and non normal form part $Q$; then eliminate $Q$ by a symplectic coordinate transformation $\Psi$, which is the time-1-map of the flow of a Hamiltonian vector field $X_F$. In order to establish the regularity of $X_{F}$, we estimate the lower bound of $|j^2-k^2+l^2-m^2|$ for non-resonant indices $(j,k,l,m)$, seeing Lemma \ref{20200530-1}.

For the proof of Theorem \ref{20200823-2}, we first calculate the resulting 6-order Hamiltonian $R_6:=K+\tilde{K}+\tilde{Q}$, where $K$ is normal form part, $\tilde{K}$ is resonant non normal form part, and $\tilde{Q}$ is non-resonant part. Here, a key ingredient is to verify $\tilde{K}=0$, seeing (\ref{20200825-10}). Then formally as before, we eliminate $\tilde{Q}$ by another symplectic coordinate transformation $\Phi$, which is the time-1-map of the flow of a Hamiltonian vector field $X_{\tilde{F}}$. However, it is not easy to establish the regularity of $X_{\tilde{F}}$. Precisely, $\tilde{Q}$ consists of monomials $q_{j_1}\bar{q}_{j_2}q_{j_3}\bar{q}_{j_4}q_{j_5}\bar{q}_{j_6}$ with $j_1^2-j_2^2+j_3^2-j_4^2+j_5^2-j_6^2\neq0$, which implies
\begin{equation}\label{20200918-1}
|j_1^2-j_2^2+j_3^2-j_4^2+j_5^2-j_6^2|\geq1.
\end{equation}
However, the lower bound estimate (\ref{20200918-1}) is usually not enough owing to the unboundedness of the vector field $X_{\tilde{Q}}$. Assume $|j_1|\geq|j_3|\geq|j_5|$, $|j_2|\geq|j_4|\geq|j_6|$, $|j_1|{\geq}|j_2|$ without loss of generality. If $j_1{\neq}j_2$, or $j_1=j_2$, $j_1^*\leq100{j_3^*}^2$, we estimate the lower bound in Lemma \ref{20200830-5}. Otherwise, $j_1=j_2$, $j_1^*>100{j_3^*}^2$, we explicitly write out this type of reducible non-resonant part $\tilde{Q}_0$ of $\tilde{Q}$, seeing (\ref{20200827-3}), and then we find its coefficients are roughly no more than $\frac{1}{j_1^*}$, seeing (\ref{20200830-16}) with $j_1^*=n$. This is a key observation. Consequently, the vector field $X_{\tilde{Q}_0}$ is bounded, and the lower bound estimate (\ref{20200918-1}) is enough.

For the proof of Corollary \ref{20200823-3}, the case $2<s\leq4$ follows from Theorem \ref{20200530-6} and the case $s>4$ follows from Theorem \ref{20200823-2}. We only give the estimate of $\frac{d}{dt}\|q\|_s^2$ in detail. To this end, we estimate the bound of $j_1|j_1|^{2s}-j_2|j_2|^{2s}+\cdots+j_{2r-1}|j_{2r-1}|^{2s}-j_{2r}|j_{2r}|^{2s}$
for any $r\geq3$ and $(j_1,\cdots,j_{2r})\in\mathcal{M}_{r}$, seeing Lemma \ref{20200830-18}.

Appendix contains a lemma, seeing Lemma \ref{20200826-3}, which is used to prove $\tilde{K}=0$ as mentioned above. Actually, only the second conclusion (\ref{20200826-8}) is used here. The first conclusion (\ref{20200826-7}), besides being used in the proof of (\ref{20200826-8}), even has its own meaning. In view of (\ref{20200826-1}), for $j,k,l,m\in\mathbb{Z}$ with $j-k+l-m=0$ and $j,l\neq{k}$, we have $\mu(j,k,l)=\frac{1}{(j-k)(l-k)}=\frac{-2}{j^2-k^2+l^2-m^2}$. Thus (\ref{20200826-7}) could be used to calculate complete Birkhoff normal form of order six for the usual cubic nonlinear Schr\"{o}dinger
equation with periodic boundary conditions
\begin{equation}
{\mi}u_t+u_{xx}\pm|u|^2u=0,\hspace{12pt}x\in\mathbb{T}.
\end{equation}

\section{Proof}
In this section, for convenience the notation ``$\in\mathbb{Z}_*$" is usually abbreviated as ``$\neq0$" or omitted. Define the non-resonant index set
\begin{equation*}
\Delta=\{(j,k,l,m)\in\mathbb{Z}_*^4:j-k+l-m=0,j\neq{k,m}\},
\end{equation*}
and denote the decreasing rearrangement of $\{|j|,|k|,|l|,|m|\}$ by $j_1^*{\geq}j_2^*\geq{j_3^*}{\geq}j_4^*$.

\begin{lem}\label{20200530-1}
For $(j,k,l,m)\in\Delta$, we have
\begin{equation}\label{20200530-2}
|j^2-k^2+l^2-m^2|\geq\frac{\sqrt{j_1^*}^3}{2\sqrt{j_2^*j_3^*j_4^*}}.
\end{equation}
\end{lem}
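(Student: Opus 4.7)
The plan is to exploit the identity
\[
j^2 - k^2 + l^2 - m^2 = 2(j-k)(k-l),
\]
obtained by eliminating $m = j - k + l$ from the zero-momentum constraint. Setting $a = j - k$ and $b = k - l$, the conditions $j \neq k$ and $j \neq m$ (the latter equivalent to $k \neq l$, since $j + l = k + m$) force $a, b$ to be nonzero integers, so $|a|, |b| \geq 1$. The target bound is then equivalent to the algebraic inequality
\[
16\, a^2 b^2 \cdot j_2^* j_3^* j_4^* \geq (j_1^*)^3.
\]

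The symmetries $(j,l) \leftrightarrow (l,j)$, $(k,m) \leftrightarrow (m,k)$, the involution $(j,k,l,m) \mapsto (k,j,m,l)$, and global sign flip all preserve $\Delta$ and the quantity $|j^2 - k^2 + l^2 - m^2|$. Using them I would reduce to $j > 0$ with $|j| = j_1^* =: n$, so that $j_2^* j_3^* j_4^* = |k||l||m|$; the triangle bound $n = |k - l + m| \leq |k| + |l| + |m|$ then yields $j_2^* \geq n/3$.

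The argument then splits into cases. If $j$ and $k$ have opposite signs (or symmetrically $j, m$ do), then $|a| = |j| + |k| \geq n$ and
\[
16\, a^2 b^2 \cdot j_2^* j_3^* j_4^* \geq 16 n^2 \cdot (n/3) > n^3.
\]
Otherwise $k, m > 0$, and writing $l = k + m - n$, if $l < 0$ one has $|a| + |b| = 2n - (k+m) = n + |l| \geq n + 1$. Together with $|a|, |b| \geq 1$ this gives $|a||b| \geq |a| + |b| - 1 \geq n$ (since $(|a|-1)(|b|-1) \geq 0$), and the same estimate applies. The remaining case $l > 0$: setting $p = |a|,\ q = |b|,\ w = l$ one gets positive integers with $p + q + w = n$, $|k| = q + w$, $|m| = p + w$, and the desired inequality becomes the purely algebraic assertion
\[
16\, p^2 q^2\, w\, (w+p)(w+q) \geq (p+q+w)^3, \qquad p, q, w \in \mathbb{Z}_{\geq 1}.
\]

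This last inequality is the main obstacle. I would prove it by expanding the difference $D(w)$ of the two sides: it is a cubic in $w$ whose coefficients of $w^3, w^2, w$ are $16(pq)^2 - 1$, $(p+q)(16(pq)^2 - 3)$, and $16(pq)^3 - 3(p+q)^2$, all strictly positive for $p, q \geq 1$ (the last one using the elementary estimate $p + q \leq 2pq$, so $(p+q)^2 \leq 4(pq)^2$). Hence $D$ is strictly increasing on $[0,\infty)$, and it suffices to check $D(1) = 16(pq)^2(1+p)(1+q) - (1+p+q)^3 \geq 0$. With $s = p+q \geq 2$ and $t = pq$, the lower bound $t \geq s-1$ (equivalent to $(p-1)(q-1) \geq 0$) together with monotonicity in $t$ reduces this to the single-variable inequality $32 s(s-1)^2 \geq (1+s)^3$, an elementary check for $s \geq 2$.
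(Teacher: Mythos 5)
Your proof is correct: the factorization $j^2-k^2+l^2-m^2=2(j-k)(k-l)$ is the same identity the paper uses (there written as $-2(m-j)(m-l)$), the symmetry reduction to $j=j_1^*>0$ is legitimate since the swaps $(j,l)\leftrightarrow(l,j)$, $(k,m)\leftrightarrow(m,k)$ and the conjugation $(j,k,l,m)\mapsto(k,j,m,l)$ act transitively on the four slots while preserving $\Delta$, and I checked the sign cases and the final polynomial inequality $16p^2q^2w(w+p)(w+q)\geq(p+q+w)^3$; your monotonicity-in-$w$ and then $t\geq s-1$ reduction does close it. However, the route is genuinely different from, and much heavier than, the paper's. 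After the same factorization, the paper simply applies the elementary inequality $2ab\geq a+b$ (for positive integers) twice to produce two separate lower bounds, $|j^2-k^2+l^2-m^2|\geq \frac{(j_1^*)^2}{2 j_a^* j_b^*}$ and $\geq \frac{j_1^*}{2 j_c^*}$ where $\{j_a^*,j_b^*,j_c^*\}$ are the three smaller moduli, and then takes the geometric mean of the two; this yields exactly the exponent $3/2$ on $j_1^*$, the exponent $1/2$ on $j_2^*j_3^*j_4^*$, and the constant $\frac12$ in one stroke, with no case analysis on signs and no polynomial verification. What your version buys is a completely explicit, self-contained reduction to an integer inequality that could in principle be sharpened or adapted; what it costs is length and fragility (the correctness hinges on several sign determinations such as $l<k$ when all entries are positive, which you did get right but which the paper's argument never needs). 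If you keep your write-up, I would at least point out that the two displayed lower bounds $\frac{m^2}{2|jl|}$ and $\frac{|m|}{2|k|}$ combined multiplicatively give the result immediately, so the reader sees why the particular shape of the right-hand side of (\ref{20200530-2}) arises.
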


\begin{proof}
Without loss of generality, we assume $j_1^*=|m|$. Ahead of the proof, we give a simple inequality for two positive integers $a,b$:
\begin{equation*}
2ab\geq a+b,
\end{equation*}
which will be frequently used. From $j-k+l-m=0$ we get
\begin{equation}\label{20200530-3}
j^2-k^2+l^2-m^2=-2(m-j)(m-l)=-2(m-j)(j-k).
\end{equation}
Notice that $j,k,l,m\neq0$ and $j,l\neq{k,m}$. Hence, on one hand,
\begin{equation}\label{20200530-4}
|j^2-k^2+l^2-m^2|=\frac{2|(m-j)(m-l)jl|}{|jl|}\geq\frac{(|m-j|+|j|)(|m-l|+|l|)}{2|jl|}\geq\frac{m^2}{2|jl|};
\end{equation}
on the other hand,
\begin{equation}\label{20200530-5}
|j^2-k^2+l^2-m^2|=\frac{2|(m-j)(j-k)k|}{|k|}\geq\frac{|(m-j)j|}{|k|}\geq\frac{|m|}{2|k|}.
\end{equation}
We conclude form (\ref{20200530-4}) (\ref{20200530-5}) the inequality (\ref{20200530-2}).
\end{proof}

\begin{proof}[\textbf{Proof of Theorem \ref{20200530-6}}]
The normal form part of $G$ is
(\ref{20200523-8}) with $j=k$ or $j=m$, which is just $B$ in (\ref{20200523-10}); the non normal form part of $G$ is
\begin{equation}\label{20200523-11}
Q=\frac{1}{4\pi}\sum_{(j,k,l,m)\in\Delta}q_j\bar{q}_kq_l\bar{q}_m.
\end{equation}
Then the Hamiltonian (\ref{20200523-6}) is written as
\begin{equation}\label{20200523-12}
H=\Lambda+B+Q.
\end{equation}
Obviously the functions $B$, $Q$ are analytic in $\ell^{2}_s$ with real value, and their vector fields $X_B$, $X_Q$ are analytic maps from $\ell^{2}_s$ into $\ell^{2}_{s-1}$ with
\begin{equation}\label{20200523-13}
\|X_B\|_{s-1},\hspace{3pt}\|X_Q\|_{s-1}=O(\|q\|^3_{s}).
\end{equation}
Define $F=\sum_{j,k,l,m\neq0}F_{jklm}q_j\bar{q}_kq_l\bar{q}_m$ by
\begin{equation}\label{20200530-9}
F_{jklm}=
\begin{cases}
\frac{\mi}{4\pi}\frac{1}{j^2-k^2+l^2-m^2},\hspace{12pt}&\text{for}\hspace{6pt}(j,k,l,m)\in\Delta,\\
0,&\text{otherwise}.
\end{cases}
\end{equation}
Then we have
\begin{equation}\label{20200530-10}
\{\Lambda,F\}+Q=0.
\end{equation}
Let $\Psi=X^1_{F}$ be the time-1-map of the flow of the Hamiltonian
vector field $X_F$, and then
\begin{eqnarray}\label{20200530-11}
H\circ\Psi&=&H\circ{X}^t_{F}|_{t=1}\nonumber\\
&=&\Lambda+\{\Lambda,F\}+B+Q\nonumber\\
& &+\int^1_0(1-t)\{\{\Lambda,F\},F\}\circ{X}^t_{F}dt+\int^1_0\{B+Q,F\}\circ{X}^t_{F}dt\nonumber\\
&=&\Lambda+B+R,
\end{eqnarray}
where
\begin{equation}\label{20200530-12}
R=\int^1_0\{B+tQ,F\}\circ{X}^t_{F}dt.
\end{equation}
For $(j,k,l,m)\in\Delta$, by Lemma \ref{20200530-1} we have
\begin{equation}\label{20200830-1}
|F_{jklm}|\leq\frac{1}{2\pi}(j_1^*)^{-\frac{3}{2}}(j_2^*j_3^*j_4^*)^{\frac{1}{2}}.
\end{equation}
From the coefficient estimate (\ref{20200830-1}), we deduce the estimates of vector field $X_F$ and its derivative $DX_F$ as follows
\begin{equation}\label{20200830-2}
\|X_F\|_{s}=O(\|q\|^3_{s}),
\end{equation}
\begin{equation}\label{20200830-3}
\|DX_F\|_{\ell^{2}_{s}\rightarrow\ell^{2}_{s}},\hspace{3pt}\|DX_F\|_{\ell^{2}_{s-1}\rightarrow\ell^{2}_{s-1}}=O(\|q\|^2_{s}).
\end{equation}
These establish the regularity of the vector field $X_{F}$. In view of (\ref{20200530-12}), the estimate of vector field $X_R$ in (\ref{20200530-8}) follows from (\ref{20200523-13}) (\ref{20200830-2}) (\ref{20200830-3}). Moreover, in view of (\ref{20200523-8}), the coefficient $\frac{1}{4\pi}$ of $G$ and thus $B$, $Q$ is no more than
\begin{equation}\label{20200830-4}
\frac{\sqrt{3}}{4\pi}(j_1^*)^{-\frac{1}{2}}(j_2^*j_3^*j_4^*)^{\frac{1}{2}}.
\end{equation}
Then by induction, starting with the coefficient estimates of $B$, $Q$ in (\ref{20200830-4}) and $F$ in (\ref{20200830-1}), we get the coefficient estimate of $R$ in (\ref{20200824-2}). This finishes the proof of Theorem \ref{20200530-6}.
\end{proof}

Define the non-resonant index set
\begin{equation}\label{20200827-5}
\tilde{\Delta}=\left\{(j_1,j_2,j_3,j_4,j_5,j_6)\in\mathbb{Z}_*^6
\Big|\substack{j_1-j_2+j_3-j_4+j_5-j_6=0\\j_1^2-j_2^2+j_3^2-j_4^2+j_5^2-j_6^2\neq0}\right\},
\end{equation}
and denote the decreasing rearrangement of $\{|j_1|,|j_2|,|j_3|,|j_4|,|j_5|,|j_6|\}$ by $j_1^*{\geq}j_2^*\geq{j_3^*}{\geq}j_4^*{\geq}j_5^*{\geq}j_5^*$.

\begin{lem}\label{20200830-5}
For $(j_1,j_2,j_3,j_4,j_5,j_6)\in\tilde{\Delta}$ with $|j_1|\geq|j_3|\geq|j_5|$, $|j_2|\geq|j_4|\geq|j_6|$, $|j_1|\geq|j_2|$, except the case
\begin{equation}\label{20200830-6}
j_1=j_2,\hspace{6pt}j_1^*>100{j_3^*}^2,
\end{equation}
we have
\begin{equation}\label{20200830-7}
|j_1^2-j_2^2+j_3^2-j_4^2+j_5^2-j_6^2|\geq\frac{{j_1^*}^3}{100(j_2^*j_3^*j_4^*j_5^*j_6^*)^2}.
\end{equation}
\end{lem}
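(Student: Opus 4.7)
The plan is to decompose $E := j_1^2 - j_2^2 + j_3^2 - j_4^2 + j_5^2 - j_6^2$ via the identity
\begin{equation*}
E = (j_1-j_2)(j_1+j_2) + (j_3-j_4)(j_3+j_4) + (j_5-j_6)(j_5+j_6),
\end{equation*}
and to combine three basic tools: the momentum consequence $j_1^* = |j_1| \leq 5 j_2^*$, the four-variable bound of Lemma~\ref{20200530-1}, and the trivial integrality bound $|E| \geq 1$ available for $(j_1,\dots,j_6)\in\tilde{\Delta}$. The argument splits into three cases according to the relationship between $j_1$ and $j_2$.

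When $j_1 = j_2$ (so by hypothesis $j_1^* \leq 100(j_3^*)^2$), the first term of $E$ vanishes and momentum reduces to $j_3-j_4+j_5-j_6=0$. A short check rules out $j_3 = j_4$ and $j_3 = j_6$ (either would force $E=0$), so $(j_3,j_4,j_5,j_6) \in \Delta$ and Lemma~\ref{20200530-1} gives $|E| \geq (j_3^*)^{3/2}/(2\sqrt{j_4^* j_5^* j_6^*})$. Using $j_2^* = j_1^*$, the hypothesis $j_1^* \leq 100(j_3^*)^2$, and $j_4^*, j_5^*, j_6^* \geq 1$, the desired inequality reduces to $j_3^* \geq 2$; the remaining possibility $j_3^* = 1$ would force $|j_3|=\cdots=|j_6|=1$ and hence $E=0$, violating $(j_1,\dots,j_6)\in\tilde{\Delta}$.

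In the case $|j_1| > |j_2|$, we have $|j_1^2 - j_2^2| = (|j_1|-|j_2|)(|j_1|+|j_2|) \geq j_1^*$, while the remaining piece satisfies $|j_3^2 - j_4^2 + j_5^2 - j_6^2| \leq 2(j_3^*)^2$. If $j_1^* \geq 4(j_3^*)^2$, the first term dominates, giving $|E| \geq j_1^*/2$, and combining with $(j_2^* j_3^* j_4^* j_5^* j_6^*)^2 \geq (j_2^*)^2 \geq (j_1^*)^2/25$ finishes the sub-case. If instead $j_1^* < 4(j_3^*)^2$, the factorization is too weak, but
\begin{equation*}
(j_1^*)^3 < 4(j_1^*)^2(j_3^*)^2 = 100\bigl((j_1^*/5)\, j_3^*\bigr)^2 \leq 100\, (j_2^* j_3^* j_4^* j_5^* j_6^*)^2
\end{equation*}
shows that the target bound is below $1$, so the integrality bound $|E|\geq 1$ suffices. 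Finally, when $j_1 = -j_2$, the first difference vanishes, momentum gives $2|j_1| = |j_3-j_4+j_5-j_6|\leq 4 j_3^*$ (hence $j_1^* \leq 2j_3^*$), and with $j_2^* = j_1^*$ a similar bookkeeping reduces the target below $1$, again closed by $|E|\geq 1$.

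The one delicate point I anticipate is the middle sub-case above, $|j_1|>|j_2|$ with $j_1^* < 4(j_3^*)^2$: here $j_1^2-j_2^2$ and the remaining four-variable piece are comparable in size, so factorization cannot produce a useful lower bound. The rescue is that in this regime $j_1^*$ is automatically bounded in terms of $(j_3^*)^2$, which forces the target lower bound below $1$; integrality of $E$ then closes the argument with no further algebra.
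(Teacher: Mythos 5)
Your cases $j_1=j_2$ and $j_1=-j_2$ are fine (the first is more elaborate than necessary: since the right-hand side of (\ref{20200830-7}) is at most $1$ there, the integrality bound $|E|\geq1$ already closes it, which is exactly what the paper does), and your sub-case $|j_1|>|j_2|$ with $j_1^*<4(j_3^*)^2$ matches the paper's reduction. The gap is in the sub-case $|j_1|>|j_2|$, $j_1^*\geq4(j_3^*)^2$: the claimed bound $|j_3^2-j_4^2+j_5^2-j_6^2|\leq2(j_3^*)^2$ is false. The hypotheses only place $|j_3|$ below $|j_1|$, not below $|j_2|$, so $|j_3|$ can equal $j_2^*$ and be comparable to $j_1^*$ while $j_3^*$ stays bounded. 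Concretely, take $j_1=N+1$, $j_2=2$, $j_3=-N$, $j_4=1$, $j_5=1$, $j_6=-1$ with $N\geq15$: the momentum is zero, all your ordering hypotheses hold, $j_1^*=N+1\geq16=4(j_3^*)^2$, yet $j_3^2-j_4^2+j_5^2-j_6^2=N^2-1$, which vastly exceeds $2(j_3^*)^2=8$. So ``the first term dominates'' is not justified as written; the two pieces of your decomposition can be of the same order.

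The conclusion $|E|\geq j_1^*/2$ of that sub-case is nevertheless true, but for a reason your decomposition hides: the sign structure. If $|j_3|>|j_2|$ (so $j_2^*=|j_3|$), the large square $j_3^2$ enters $E$ with a plus sign, the same sign as $j_1^2$, so it reinforces rather than cancels $j_1^2$; the remaining four moduli $|j_2|,|j_4|,|j_5|,|j_6|$ are then all $\leq j_3^*$, giving $|E|\geq(j_1^*)^2+(j_2^*)^2-3(j_3^*)^2$. If instead $j_2^*=|j_2|$, then all of $|j_3|,\dots,|j_6|$ are $\leq j_3^*$ and $|E|\geq(j_1^*)^2-(j_2^*)^2-2(j_3^*)^2\geq j_1^*+j_2^*-2(j_3^*)^2$, using $|j_1|>|j_2|$. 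This two-way split on whether $j_2^*=|j_2|$ or $j_2^*=|j_3|$ is precisely what the paper's proof of Lemma \ref{20200830-5} does, and it is the ingredient missing from yours; once inserted, the rest of your bookkeeping goes through.
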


\begin{proof}
If $j_1=j_2$, $j_1^*\leq100{j_3^*}^2$, then the inequality (\ref{20200830-7}) is trivial since its right hand is no more than 1; if $j_1=-j_2$, then by $j_1-j_2+j_3-j_4+j_5-j_6=0$ we have $j_1^*\leq2j_3^*$, and thus the inequality (\ref{20200830-7}) is also trivial; thus we assume $|j_1|>|j_2|$ in the following. From $j_1-j_2+j_3-j_4+j_5-j_6=0$ we get $j_1^*\leq5j_2^*$, so it is sufficient to prove
\begin{equation}\label{20200830-8}
|j_1^2-j_2^2+j_3^2-j_4^2+j_5^2-j_6^2|\geq\frac{j_1^*}{4{j_3^*}^2}.
\end{equation}
Without loss of generality, we assume $j_1^*>4{j_3^*}^2$. If $j_2^*=|j_2|$, then
\begin{equation}\label{20200830-10}
|j_1^2-j_2^2+j_3^2-j_4^2+j_5^2-j_6^2|\geq{j_1^*}^2-{j_2^*}^2-2{j_3^*}^2>\frac{1}{2}{j_1^*}>\frac{j_1^*}{4{j_3^*}^2};
\end{equation}
otherwise, $j_2^*=|j_3|$, then
\begin{equation}\label{20200830-9}
|j_1^2-j_2^2+j_3^2-j_4^2+j_5^2-j_6^2|\geq{j_1^*}^2+{j_2^*}^2-3{j_3^*}^2>\frac{1}{2}{j_1^*}^2>\frac{j_1^*}{4{j_3^*}^2}.
\end{equation}
\end{proof}

\begin{proof}[\textbf{Proof of Theorem \ref{20200823-2}}]
In view of (\ref{20200530-12}), we have
\begin{equation}\label{20200825-1}
R=R_6+R_{\geq8},
\end{equation}
where the 6-order term
\begin{equation}\label{20200825-2}
R_6=\{B,F\}+\frac{1}{2}\{Q,F\},
\end{equation}
and the higher order terms
\begin{equation}\label{20200825-3}
R_{\geq8}=\int^1_0(1-t)\{\{B,F\},F\}\circ{X}^t_{F}dt+\frac{1}{2}\int^1_0(1-t^2)\{\{Q,F\},F\}\circ{X}^t_{F}dt.
\end{equation}
In view of (\ref{20200523-10}) (\ref{20200523-11}) (\ref{20200530-9}), calculating directly, we have
\begin{equation}\label{20200825-4}
\{B,F\}=-\frac{1}{4\pi^2}\sum_{(j,k,l,m)\in\Delta}\frac{m}{j^2-k^2+l^2-m^2}q_j\bar{q}_kq_l\bar{q}_m|q_m|^2+c.c.,
\end{equation}
where the notation ``$c.c.$" stands for complex conjugation, and
\begin{eqnarray}\label{20200825-5}
& &\frac{1}{2}\{Q,F\}\nonumber\\
&=&\frac{1}{2}\sum_{m\neq0}\frac{\partial{Q}}{\partial{q_m}}\big({-\mi}m\frac{\partial{F}}{\partial\bar{q}_m}\big)+c.c.\nonumber\\
&=&\frac{1}{2}\sum_{m\neq0}\Bigg(\frac{1}{2\pi}\sum_{\substack{m_1-m_2+m_3=m\\m_1,m_3\neq{m_2}}}\bar{q}_{m_1}q_{m_2}\bar{q}_{m_3}\Bigg)\Bigg(\frac{m}{2\pi}\sum_{\substack{j-k+l=m\\j,l\neq{k}}}\frac{1}{j^2-k^2+l^2-m^2}q_j\bar{q}_kq_l\Bigg)+c.c.\nonumber\\
&=&-\frac{1}{16\pi^2}\sum_{\substack{j-k+l-m_1+m_2-m_3=0\\j,l\neq{k}\\m_1,m_3\neq{m_2}}}\tau(j,k,l)q_j\bar{q}_kq_l\bar{q}_{m_1}q_{m_2}\bar{q}_{m_3}+c.c.,
\end{eqnarray}
where
\begin{equation}\label{20200825-6}
\tau(j,k,l)=\frac{-2(j-k+l)}{j^2-k^2+l^2-(j-k+l)^2}=\frac{j-k+l}{(j-k)(l-k)}.
\end{equation}

The normal form part $K$ of $R_6$ is
(\ref{20200825-5}) with $k=m_2$ and $\{j,l\}=\{m_1,m_3\}$, i.e.,
\begin{eqnarray}\label{20200825-7}
K&=&-\frac{1}{8\pi^2}\sum_{\substack{j,l\neq{k}}}\tau(j,k,l)(2-\delta_{jl})|q_j|^2|q_k|^2|q_l|^2\nonumber\\
&=&-\frac{1}{8\pi^2}\sum_{\substack{j\neq{k}}}\tau(j,k,j)|q_j|^4|q_k|^2-\frac{1}{4\pi^2}\sum_{\substack{j{\neq}k,k{\neq}l,l{\neq}j}}\tau(j,k,l)|q_j|^2|q_k|^2|q_l|^2\nonumber\\
&=&-\frac{1}{8\pi^2}\sum_{\substack{j\neq{k}}}\frac{2j-k}{(j-k)^2}|q_j|^4|q_k|^2,
\end{eqnarray}
where $\delta_{jl}=
\begin{cases}
1,&\hspace{0pt}j=l\\
0,&\hspace{0pt}j\neq{l}
\end{cases}$ and the last equality follows from the fact
\begin{equation}\label{20200825-8}
\tau(j,k,l)+\tau(k,l,j)+\tau(l,j,k)=0.
\end{equation}

The resonant non normal form part $\tilde{K}$ of $R_6$ is
(\ref{20200825-5}) with $j^2-k^2+l^2-m_1^2+m_2^2-m_3^2=0$ and $\{j,l,m_2\}\neq\{k,m_1,m_3\}$. These restrictions imply $\{j,l,m_2\}\cap\{k,m_1,m_3\}=\emptyset$ actually. Define the resonant non normal form index set
\begin{equation}\label{20200825-9}
\mathcal{R}=\left\{(j_1,j_2,j_3,j_4,j_5,j_6)\in\mathbb{Z}_*^6
\bigg|\substack{j_1-j_2+j_3-j_4+j_5-j_6=0\\j_1^2-j_2^2+j_3^2-j_4^2+j_5^2-j_6^2=0\\\{j_1,j_3,j_5\}\cap\{j_2,j_4,j_6\}=\emptyset}\right\}.
\end{equation}
Then we could rewrite $\tilde{K}$ as
\begin{eqnarray}\label{20200825-10}
\tilde{K}&=&-\frac{1}{16\pi^2}\sum_{(j_1,j_2,j_3,j_4,j_5,j_6)\in\mathcal{R}}\Big(\tau(j_1,j_2,j_3)+\tau(j_4,j_5,j_6)\Big)q_{j_1}\bar{q}_{j_2}q_{j_3}\bar{q}_{j_4}q_{j_5}\bar{q}_{j_6}\nonumber\\
&=&-\frac{1}{8\pi^2}\sum_{(j_1,j_2,j_3,j_4,j_5,j_6)\in\mathcal{R}}\tau(j_1,j_2,j_3)q_{j_1}\bar{q}_{j_2}q_{j_3}\bar{q}_{j_4}q_{j_5}\bar{q}_{j_6}\nonumber\\
&=&-\frac{1}{72\pi^2}\sum_{(j_1,j_2,j_3,j_4,j_5,j_6)\in\mathcal{R}}\Bigg(\sum_{\substack{\{\alpha<\gamma\}\subset\{1,3,5\}\\\beta\in\{2,4,6\}}}\tau(j_\alpha,j_\beta,j_\gamma)\Bigg)q_{j_1}\bar{q}_{j_2}q_{j_3}\bar{q}_{j_4}q_{j_5}\bar{q}_{j_6}\nonumber\\
&=&0,
\end{eqnarray}
where the last equality follows from (\ref{20200826-8}) in Lemma \ref{20200826-3}.

The non-resonant part $\tilde{Q}$ of $R_6$ consists of (\ref{20200825-4}) and (\ref{20200825-5}) with $j^2-k^2+l^2-m_1^2+m_2^2-m_3^2\neq0$. Then we could rewrite $\tilde{Q}$ of the form
\begin{equation}\label{20200827-6}
\tilde{Q}=\sum_{(j_1,j_2,j_3,j_4,j_5,j_6)\in\tilde{\Delta}}\tilde{Q}_{j_1j_2j_3j_4j_5j_6}q_{j_1}\bar{q}_{j_2}q_{j_3}\bar{q}_{j_4}q_{j_5}\bar{q}_{j_6}.
\end{equation}
Define $\tilde{F}=\sum_{(j_1,j_2,j_3,j_4,j_5,j_6)\in\tilde{\Delta}}\tilde{F}_{j_1j_2j_3j_4j_5j_6}q_{j_1}\bar{q}_{j_2}q_{j_3}\bar{q}_{j_4}q_{j_5}\bar{q}_{j_6}$ by
\begin{equation}\label{20200827-7}
\tilde{F}_{j_1j_2j_3j_4j_5j_6}=\frac{\mi}{j_1^2-j_2^2+j_3^2-j_4^2+j_5^2-j_6^2}\tilde{Q}_{j_1j_2j_3j_4j_5j_6}.
\end{equation}
Then we have
\begin{equation}\label{20200827-8}
\{\Lambda,\tilde{F}\}+\tilde{Q}=0.
\end{equation}
Let $\Phi=X^1_{\tilde{F}}$ be the time-1-map of the flow of the Hamiltonian
vector field $X_{\tilde{F}}$, and then
\begin{eqnarray}\label{20200827-9}
H\circ\Psi\circ\Phi&=&(\Lambda+B+K+\tilde{Q}+R_{\geq8})\circ{X}^t_{\tilde{F}}|_{t=1}\nonumber\\
&=&\Lambda+\{\Lambda,\tilde{F}\}+B+K+\tilde{Q}\nonumber\\
&&+\int^1_0(1-t)\{\{\Lambda,\tilde{F}\},\tilde{F}\}\circ{X}^t_{\tilde{F}}dt+\int^1_0\{B+K+\tilde{Q},\tilde{F}\}\circ{X}^t_{\tilde{F}}dt+R_{\geq8}\circ{X}^1_{\tilde{F}}\nonumber\\
&=&\Lambda+B+K+\tilde{R},
\end{eqnarray}
where
\begin{equation}\label{20200827-10}
\tilde{R}=\int^1_0\{B+K+\tilde{Q},t\tilde{F}\}\circ{X}^t_{\tilde{F}}dt+R_{\geq8}\circ{X}^1_{\tilde{F}}.
\end{equation}

To complete the proof of Theorem \ref{20200823-2}, it is sufficient to prove the following coefficient estimate
\begin{equation}\label{20200827-11}
|\tilde{F}_{j_1j_2j_3j_4j_5j_6}|<C_1(j_1^*)^{-\frac{7}{2}}(j_2^*j_3^*j_4^*j_5^*j_6^*)^\frac{5}{2}
\end{equation}
with some constant $C_1>0$. Actually, by (\ref{20200827-11}), we deduce for $s>3$ the estimates of vector field $X_{\tilde{F}}$ and its derivative $DX_{\tilde{F}}$ as follows
\begin{equation}\label{20200830-11}
\|X_{\tilde{F}}\|_{s}=O(\|q\|^5_{s}),
\end{equation}
\begin{equation}\label{20200830-12}
\|DX_{\tilde{F}}\|_{\ell^{2}_{s}\rightarrow\ell^{2}_{s}},\hspace{3pt}\|DX_{\tilde{F}}\|_{\ell^{2}_{s-1}\rightarrow\ell^{2}_{s-1}}=O(\|q\|^4_{s}).
\end{equation}
These establish the regularity of the vector field $X_{\tilde{F}}$. Then in view of the formula of $\tilde{R}$ in (\ref{20200827-10}), by (\ref{20200530-8}) (\ref{20200830-11}) (\ref{20200830-12}) we could deduce the estimate of its vector field in (\ref{20200823-6}), and by (\ref{20200824-2}) (\ref{20200827-11}) we could deduce the estimate of its coefficients in (\ref{20200824-6}).

To prove (\ref{20200827-11}), by (\ref{20200827-7}) it is equivalent to prove
\begin{equation}\label{20200830-13}
|\tilde{Q}_{j_1j_2j_3j_4j_5j_6}|<C_1(j_1^*)^{-\frac{7}{2}}(j_2^*j_3^*j_4^*j_5^*j_6^*)^\frac{5}{2}|j_1^2-j_2^2+j_3^2-j_4^2+j_5^2-j_6^2|.
\end{equation}
Without loss of generality, we assume $|j_1|\geq|j_3|\geq|j_5|$, $|j_2|\geq|j_4|\geq|j_6|$, $|j_1|{\geq}|j_2|$. If $j_1{\neq}j_2$, or $j_1=j_2$, $j_1^*\leq100{j_3^*}^2$, then by Lemma \ref{20200830-5} it is sufficient to prove
\begin{equation}\label{20200830-14}
|\tilde{Q}_{j_1j_2j_3j_4j_5j_6}|<\frac{C_1}{100}(j_1^*)^{-\frac{1}{2}}(j_2^*j_3^*j_4^*j_5^*j_6^*)^\frac{1}{2},
\end{equation}
which is ensured by (\ref{20200824-2}) with taking $C_1{\geq}100C^3$; otherwise, $j_1=j_2$, $j_1^*>100{j_3^*}^2$, it is sufficient to prove
\begin{equation}\label{20200830-15}
|\tilde{Q}_{j_1j_2j_3j_4j_5j_6}|<\frac{C_1}{j_1^*},
\end{equation}
which will be checked by directly calculating this type of coefficients in the following. This type of reducible part $\tilde{Q}_0$ of $\tilde{Q}$ comes from (\ref{20200825-5}) with $j^2-k^2+l^2-m_1^2+m_2^2-m_3^2\neq0$ and $\{j,l,m_2\}\cap\{k,m_1,m_3\}\neq\emptyset$. The last restriction implies $m_2=k$ or $\{j,l\}\cap\{m_1,m_3\}\neq\emptyset$. For $m_2=k$, noticing the non-resonant condition $j^2+l^2-m_1^2-m_3^2\neq0$, we get
\begin{equation}\label{20200827-1}
-\frac{1}{16\pi^2}\sum_{\substack{(j,m_1,l,m_3)\in\Delta\\j,l,m_1,m_3\neq{k}}}\tau(j,k,l)q_jq_l\bar{q}_{m_1}\bar{q}_{m_3}|q_k|^2+c.c.,
\end{equation}
and for $\{j,l\}\cap\{m_1,m_3\}\neq\emptyset$, we get
\begin{equation}\label{20200827-2}
-\frac{1}{16\pi^2}\sum_{\substack{(j,k,m_2,m_1)\in\Delta\\k,m_2\neq{l}}}\tau(j,k,l)(4-2\delta_{jl}-2\delta_{lm_1})q_j\bar{q}_k\bar{q}_{m_1}q_{m_2}|q_l|^2+c.c..
\end{equation}
From (\ref{20200827-1}) (\ref{20200827-2}), we get
\begin{eqnarray}\label{20200827-3}
\tilde{Q}_0&=&-\frac{1}{16\pi^2}\sum_{\substack{(j,k,l,m)\in\Delta\\|n|>100\max\{j^2,k^2,l^2,m^2\}}}\big(\tau(j,n,l)+\tau(k,n,m)\big)q_j\bar{q}_kq_l\bar{q}_m|q_n|^2\nonumber\\
&&-\frac{1}{4\pi^2}\sum_{\substack{(j,k,l,m)\in\Delta\\|n|>100\max\{j^2,k^2,l^2,m^2\}}}\big(\tau(j,k,n)+\tau(k,j,n)\big)q_j\bar{q}_kq_l\bar{q}_m|q_n|^2\nonumber\\
&=&\frac{1}{16\pi^2}\sum_{\substack{(j,k,l,m)\in\Delta\\|n|>100\max\{j^2,k^2,l^2,m^2\}}}\big(4\tau(j,n,k)-\tau(j,n,l)-\tau(k,n,m)\big)q_j\bar{q}_kq_l\bar{q}_m|q_n|^2,
\end{eqnarray}
where
\begin{equation}\label{20200830-16}
|\tau(j,n,k)|,\hspace{6pt}|\tau(j,n,l)|,\hspace{6pt}|\tau(k,n,m)|<\frac{2}{n}.
\end{equation}
Thus, (\ref{20200830-15}) is ensured by taking $C_1{\geq}\frac{3}{4\pi^2}$. This finishes the proof of Theorem \ref{20200823-2}.
\end{proof}

For $(j_1,\cdots,j_{2r})\in\mathcal{M}_{r}$, denote
\begin{equation}\label{20200830-17}
\Omega_s(j_1,\cdots,j_{2r}):=j_1|j_1|^{2s}-j_2|j_2|^{2s}+\cdots+j_{2r-1}|j_{2r-1}|^{2s}-j_{2r}|j_{2r}|^{2s}.
\end{equation}

\begin{lem}\label{20200830-18}
For $(j_1,\cdots,j_{2r})\in\mathcal{M}_{r}$ and $s\geq1$, we have
\begin{equation}\label{20200830-19}
|\Omega_s(j_1,\cdots,j_{2r})|\leq(2s+1)(2r)^{s+2}{j_1^*}^s{j_2^*}^sj_3^*.
\end{equation}
\end{lem}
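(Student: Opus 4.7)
The plan is to reduce the bound to a one-sided problem via a sign change, and then exploit the zero-momentum hypothesis twice: first to control the ratio of the two largest absolute values, and second to cancel the leading pair in $\Omega_s$. Set $y_i := (-1)^{i+1} j_i$; then the momentum relation becomes $\sum_i y_i = 0$, while
\begin{equation*}
\Omega_s(j_1,\ldots,j_{2r}) = \sum_{i=1}^{2r} y_i |y_i|^{2s}, \qquad |y_i|=|j_i|.
\end{equation*}
Let $i_1, i_2$ be indices for which $|y_i|$ attains $j_1^*$ and $j_2^*$ respectively. From $\sum_i y_i = 0$ and the triangle inequality,
\begin{equation*}
|y_{i_1}+y_{i_2}| = \Bigl|\sum_{i\neq i_1,i_2}y_i\Bigr| \leq (2r-2)j_3^*,
\end{equation*}
which produces the crucial auxiliary bound
\begin{equation*}
j_1^* \leq j_2^* + (2r-2)j_3^* \leq (2r-1) j_2^*.
\end{equation*}

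Next I split $\Omega_s = \bigl[y_{i_1}|y_{i_1}|^{2s} + y_{i_2}|y_{i_2}|^{2s}\bigr] + \sum_{i\neq i_1,i_2} y_i|y_i|^{2s}$. The tail is handled by a crude termwise estimate:
\begin{equation*}
\Bigl|\sum_{i\neq i_1,i_2} y_i|y_i|^{2s}\Bigr| \leq (2r-2)(j_3^*)^{2s+1} \leq 2r\,(j_1^*)^s (j_2^*)^s j_3^*,
\end{equation*}
where I used $j_3^* \leq j_2^* \leq j_1^*$. For the leading pair I would use the mean value theorem applied to the odd $C^1$ function $f(y):=y|y|^{2s}$ with derivative $f'(y)=(2s+1)|y|^{2s}$. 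Oddness gives
\begin{equation*}
f(y_{i_1})+f(y_{i_2}) = f(y_{i_1}) - f(-y_{i_2}) = (2s+1)|\xi|^{2s}(y_{i_1}+y_{i_2})
\end{equation*}
for some $\xi$ between $y_{i_1}$ and $-y_{i_2}$, whence $|\xi| \leq j_1^*$ and
\begin{equation*}
|f(y_{i_1})+f(y_{i_2})| \leq (2s+1)(j_1^*)^{2s}(2r-2)j_3^*.
\end{equation*}

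Finally I upgrade $(j_1^*)^{2s}$ via the ratio bound of the first paragraph: $(j_1^*)^{2s} = (j_1^*)^s (j_1^*)^s \leq (2r-1)^s (j_1^*)^s (j_2^*)^s$. Combining with the tail estimate and absorbing the constants into $(2s+1)(2r)^{s+2}$ yields the desired inequality. The only step that requires any thought is recognizing that the asymmetric factor $(j_1^*)^s(j_2^*)^s j_3^*$ in the target, rather than the naive $(j_1^*)^{2s+1}$, is made possible precisely by the auxiliary estimate $j_1^* \leq (2r-1)j_2^*$ forced by zero momentum; everything else is the triangle inequality and one application of MVT, so I do not anticipate any genuine obstacle.
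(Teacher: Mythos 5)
Your argument is correct and follows essentially the same route as the paper: both proofs isolate the two largest-modulus terms, use zero momentum to get $j_1^*\leq(2r-1)j_2^*$ and $|y_{i_1}+y_{i_2}|\leq(2r-2)j_3^*$, control the leading pair by a mean-value estimate for $y\mapsto y|y|^{2s}$ (the paper's inequality ${j_1^*}^{2s+1}-{j_2^*}^{2s+1}\leq(2s+1){j_1^*}^{2s}(j_1^*-j_2^*)$ is exactly your MVT step), and bound the tail termwise by $(2r-2)(j_3^*)^{2s+1}$. The only cosmetic difference is that your signed variables $y_i$ and the oddness of $f$ let you absorb the paper's explicit two-case split (top two reinforcing versus cancelling in the momentum relation) into a single application of the mean value theorem.
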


\begin{proof}
Without loss of generality, we assume $|j_1|\geq\cdots\geq|j_{2r-1}|$, $|j_2|\geq\cdots\geq|j_{2r}|$, $|j_1|{\geq}|j_2|$. If $j_2^*=|j_2|$ with $j_1j_2<0$, or $j_2^*=|j_3|$ with $j_1j_3>0$, then by $j_1-j_2+\cdots+j_{2r-1}-j_{2r}=0$ we have $j_1^*<2rj_3^*$ and thus (\ref{20200830-19}); otherwise, $j_2^*=|j_2|$ with $j_1j_2>0$, or $j_2^*=|j_3|$ with $j_1j_3<0$, then it is sufficient to prove
\begin{equation}\label{20200830-20}
{j_1^*}^{2s+1}-{j_2^*}^{2s+1}\leq(2s+1)(2r)^{s+1}{j_1^*}^s{j_2^*}^sj_3^*,
\end{equation}
which is ensured by $j_1^*-j_2^*<2rj_3^*$ and $j_1^*<2rj_2^*$.
\end{proof}

\begin{proof}[\textbf{Proof of Corollary \ref{20200823-3}}]
For $2<s\leq4$, by Theorem \ref{20200530-6} we get
\begin{equation}\label{20200830-21}
\frac{d}{dt}\|q\|_s^2=\{\|q\|_s^2,R\}={\mi}\sum_{r\geq3}\sum_{(j_1,\cdots,j_{2r})\in\mathcal{M}_{r}}c(j_1,\cdots,j_{2r})\Omega_s(j_1,\cdots,j_{2r})q_{j_1}\bar{q}_{j_2}{\cdots}q_{j_{2r-1}}\bar{q}_{j_{2r}}.
\end{equation}
Thus, by (\ref{20200824-2}) and Lemma \ref{20200830-18}, $\frac{d}{dt}\|q\|_s^2$ is bounded by
\begin{eqnarray}\label{20200830-22}
&&\sum_{r\geq3}(2s+1)(2r)^{s+2}C^r\sum_{(j_1,\cdots,j_{2r})\in\mathcal{M}_{r}}{j_1^*}^s{j_2^*}^s(j_3^*{\cdots}j_{2r}^*)^{\frac{3}{2}}|q_{j_1}\bar{q}_{j_2}{\cdots}q_{j_{2r-1}}\bar{q}_{j_{2r}}|\nonumber\\
&&\leq{C_2}\sum_{r\geq3}r^{s+2}C^r\|q\|_s^{2r}\nonumber\\
&&\leq{C_3}\|q\|_s^6
\end{eqnarray}
provided $\|q\|_s^2<\frac{1}{2C}$, where $C_2>0$, $C_3>0$ are constants depending on $s$. Consequently, (\ref{20200829-2}) is deduced. In the same way, this corollary with $s>4$ follows from theorem \ref{20200823-2}.
\end{proof}

\section{Appendix}
For any $x,y,z\in\mathbb{R}$ with $x,z{\neq}y$, define
\begin{equation}\label{20200826-1}
\mu(x,y,z)=\frac{1}{(x-y)(z-y)},
\end{equation}
\begin{equation}\label{20200826-2}
\tau(x,y,z)=\frac{x-y+z}{(x-y)(z-y)}.
\end{equation}

\begin{lem}\label{20200826-3}
For any $x_1,x_2,x_3,y_1,y_2,y_3\in\mathbb{R}$ with
\begin{equation}\label{20200826-4}
\{x_1,x_2,x_3\}\cap\{y_1,y_2,y_3\}=\emptyset,
\end{equation}
\begin{equation}\label{20200826-5}
x_1+x_2+x_3=y_1+y_2+y_3,
\end{equation}
\begin{equation}\label{20200826-6}
x_1^2+x_2^2+x_3^2=y_1^2+y_2^2+y_3^2,
\end{equation}
we have the following two equalities
\begin{equation}\label{20200826-7}
\uppercase\expandafter{\romannumeral1}:=\sum_{\{\alpha<\gamma\}\subset\{1,2,3\},\beta\in\{1,2,3\}}\mu(x_\alpha,y_\beta,x_\gamma)=0,
\end{equation}
\begin{equation}\label{20200826-8}
\uppercase\expandafter{\romannumeral2}:=\sum_{\{\alpha<\gamma\}\subset\{1,2,3\},\beta\in\{1,2,3\}}\tau(x_\alpha,y_\beta,x_\gamma)=0.
\end{equation}
\end{lem}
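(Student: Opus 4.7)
The plan is to encode hypotheses (\ref{20200826-5}) and (\ref{20200826-6}) via the monic cubic polynomials $P(t):=(t-x_1)(t-x_2)(t-x_3)$ and $Q(t):=(t-y_1)(t-y_2)(t-y_3)$. These conditions say that $P$ and $Q$ share their first two elementary symmetric functions, so the difference $P(t)-Q(t)$ is a constant $c:=y_1y_2y_3-x_1x_2x_3$; by the disjointness hypothesis (\ref{20200826-4}) one has $P\not\equiv Q$, hence $c\neq 0$. The crucial payoff is that $P(y_\beta)=c$ and $Q(x_\alpha)=-c$ for every index, so the denominators appearing in the sums are uniform in the summation variable. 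This is the one non-trivial observation in the whole argument.

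For the first identity I would fix $\beta$ and collapse the inner sum of three $\mu$-terms by the elementary identity
\begin{equation*}
\frac{1}{a_1a_2}+\frac{1}{a_1a_3}+\frac{1}{a_2a_3}=\frac{a_1+a_2+a_3}{a_1a_2a_3},\qquad a_\alpha:=x_\alpha-y_\beta.
\end{equation*}
Since $a_1a_2a_3=-P(y_\beta)=-c$ and $a_1+a_2+a_3=S-3y_\beta$ (with $S$ the common sum), the inner sum equals $(3y_\beta-S)/c$. Summing on $\beta$ and using $\sum_\beta y_\beta=S$ immediately gives $\mathrm{I}=0$.

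For the second identity I would decompose $x_\alpha-y_\beta+x_\gamma=(x_\alpha-y_\beta)+(x_\gamma-y_\beta)+y_\beta$ to split
\begin{equation*}
\tau(x_\alpha,y_\beta,x_\gamma)=\frac{1}{x_\alpha-y_\beta}+\frac{1}{x_\gamma-y_\beta}+y_\beta\,\mu(x_\alpha,y_\beta,x_\gamma).
\end{equation*}
Summing over $\{\alpha<\gamma\}$ and then over $\beta$, the first two terms — by counting that each $\alpha$ sits in exactly two pairs and by the logarithmic derivative identity $\sum_\beta 1/(x_\alpha-y_\beta)=Q'(x_\alpha)/Q(x_\alpha)=-Q'(x_\alpha)/c$ — collapse to $-\tfrac{2}{c}\sum_\alpha Q'(x_\alpha)$. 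The third term, by the inner sum already computed for (\ref{20200826-7}), becomes $\tfrac{1}{c}\sum_\beta y_\beta(3y_\beta-S)$. Both quantities evaluate cleanly in terms of $S$ and the common second symmetric function $e_2=\sum_{i<j}x_ix_j=\sum_{i<j}y_iy_j$ via $Q'(t)=3t^2-2St+e_2$ and $\sum y_\beta^2=\sum x_\alpha^2=S^2-2e_2$; a short computation shows the two contributions are exact negatives, giving $\mathrm{II}=0$.

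The only delicate step is really the first paragraph — recognising that (\ref{20200826-5}) and (\ref{20200826-6}) rigidify $P-Q$ to a single nonzero constant. After that, everything reduces to partial fractions and symmetric-function bookkeeping, and I anticipate no obstacle beyond the careful cancellation at the end of (\ref{20200826-8}).
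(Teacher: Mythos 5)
Your proof is correct, and it takes a genuinely different and leaner route than the paper's. The paper first uses the translation covariance of $\mu$ and $\tau$ to normalize $x_1+x_2+x_3=y_1+y_2+y_3=0$ (note that for $\tau$ this normalization is only legitimate because $\tau$ picks up a $t\mu$ term under translation, so the reduction of (\ref{20200826-8}) quietly relies on (\ref{20200826-7})); it then computes $(x_1-y_\beta)(x_2-y_\beta)(x_3-y_\beta)=X+\tfrac{N}{2}y_\beta-y_\beta^3$ as a $\beta$-dependent cubic, clears denominators by multiplying by $\prod_{\alpha,\beta}(x_\alpha-y_\beta)$, and verifies that the resulting symmetric polynomial vanishes by a term-by-term expansion using Newton-type identities such as (\ref{20200826-17}) and (\ref{20200826-18}). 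Your key observation — that (\ref{20200826-5}) and (\ref{20200826-6}) force $P-Q$ to be the constant $c=y_1y_2y_3-x_1x_2x_3$, nonzero by (\ref{20200826-4}), so that every denominator $(x_1-y_\beta)(x_2-y_\beta)(x_3-y_\beta)=-c$ and $Q(x_\alpha)=-c$ is independent of the summation index — is precisely what the paper's computation stops short of noticing (its cubic $X+\tfrac{N}{2}y_\beta-y_\beta^3$ equals $-c$ identically, but this is never used). That one remark converts the whole problem into summing linear and quadratic expressions in $S$ and $e_2$, eliminating both the translation step and the heavy polynomial expansion; all your intermediate evaluations ($a_1a_2a_3=-c$, $\sum_\alpha Q'(x_\alpha)=S^2-3e_2$, $\sum_\beta y_\beta(3y_\beta-S)=2S^2-6e_2$) check out. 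The paper's brute-force version has the minor virtue of being a self-contained verification of a polynomial identity, but yours explains \emph{why} the identity holds and would generalize more readily.
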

\begin{proof}
Observe that for every $t\in\mathbb{R}$, under the translation $x_1+t,x_2+t,x_3+t,y_1+t,y_2+t,y_3+t$, the conditions (\ref{20200826-4})-(\ref{20200826-6}) still hold true, and $\mu(x_\alpha,y_\beta,x_\gamma)$, $\tau(x_\alpha,y_\beta,x_\gamma)$ in the conclusions (\ref{20200826-7})(\ref{20200826-8}) satisfy
\begin{equation}\label{20200826-9}
\mu(x_\alpha+t,y_\beta+t,x_\gamma+t)=\mu(x_\alpha,y_\beta,x_\gamma),
\end{equation}
\begin{equation}\label{20200826-10}
\tau(x_\alpha+t,y_\beta+t,x_\gamma+t)=\tau(x_\alpha,y_\beta,x_\gamma)+t\mu(x_\alpha,y_\beta,x_\gamma).
\end{equation}
Thus, without loss of generality, we assume
\begin{equation}\label{20200826-11}
x_1+x_2+x_3=y_1+y_2+y_3=0
\end{equation}
in the following. Denoting
\begin{equation}\label{20200826-12}
N:=x_1^2+x_2^2+x_3^2=y_1^2+y_2^2+y_3^2,
\end{equation}
then calculating directly, we have
\begin{equation}\label{20200826-13}
x_1x_2+x_2x_3+x_3x_1=y_1y_2+y_2y_3+y_3y_1=-\frac{N}{2},
\end{equation}
\begin{equation}\label{20200826-14}
x_1^2x_2^2+x_2^2x_3^2+x_3^2x_1^2=y_1^2y_2^2+y_2^2y_3^2+y_3^2y_1^2=\frac{N^2}{4},
\end{equation}
\begin{equation}\label{20200826-15}
x_1^4+x_2^4+x_3^4=y_1^4+y_2^4+y_3^4=\frac{N^2}{2}.
\end{equation}
Denoting
\begin{equation}\label{20200826-16}
X:=x_1x_2x_3, \hspace{9pt}Y:=y_1y_2y_3,
\end{equation}
then calculating directly, we have
\begin{equation}\label{20200826-17}
x_1^3+x_2^3+x_3^3=3X, \hspace{9pt}y_1^3+y_2^3+y_3^3=3Y,
\end{equation}
\begin{equation}\label{20200826-18}
x_1^3x_2^3+x_2^3x_3^3+x_3^3x_1^3=3X^2-\frac{N^3}{8}, \hspace{9pt}y_1^3y_2^3+y_2^3y_3^3+y_3^3y_1^3=3Y^2-\frac{N^3}{8}.
\end{equation}

For every $\beta\in\{1,2,3\}$, we have
\begin{equation}\label{20200826-19}
(x_1-y_\beta)(x_2-y_\beta)(x_3-y_\beta)=X+\frac{N}{2}y_\beta-y_\beta^3,
\end{equation}
and then
\begin{eqnarray}\label{20200826-20}
&&\sum_{\{\alpha<\gamma\}\subset\{1,2,3\}}\mu(x_\alpha,y_\beta,x_\gamma)\nonumber\\
&=&\frac{1}{(x_1-y_\beta)(x_2-y_\beta)}+\frac{1}{(x_1-y_\beta)(x_3-y_\beta)}+\frac{1}{(x_2-y_\beta)(x_3-y_\beta)}\nonumber\\
&=&\frac{-3y_\beta}{X+\frac{N}{2}y_\beta-y_\beta^3},
\end{eqnarray}
\begin{eqnarray}\label{20200826-21}
&&\sum_{\{\alpha<\gamma\}\subset\{1,2,3\}}\tau(x_\alpha,y_\beta,x_\gamma)\nonumber\\
&=&\frac{x_1+x_2-y_\beta}{(x_1-y_\beta)(x_2-y_\beta)}+\frac{x_1+x_3-y_\beta}{(x_1-y_\beta)(x_3-y_\beta)}+\frac{x_2+x_3-y_\beta}{(x_2-y_\beta)(x_3-y_\beta)}\nonumber\\
&=&\frac{3y_\beta^2-N}{X+\frac{N}{2}y_\beta-y_\beta^3}.
\end{eqnarray}
Summing over $\beta\in\{1,2,3\}$, we get
\begin{equation}\label{20200826-22}
\uppercase\expandafter{\romannumeral1}=\frac{-3y_1}{X+\frac{N}{2}y_1-y_1^3}+\frac{-3y_2}{X+\frac{N}{2}y_2-y_2^3}+\frac{-3y_3}{X+\frac{N}{2}y_3-y_3^3},
\end{equation}
\begin{equation}\label{20200826-23}
\uppercase\expandafter{\romannumeral2}=\frac{3y_1^2-N}{X+\frac{N}{2}y_1-y_1^3}+\frac{3y_2^2-N}{X+\frac{N}{2}y_2-y_2^3}+\frac{3y_3^2-N}{X+\frac{N}{2}y_3-y_3^3}.
\end{equation}
Multiplying (\ref{20200826-22}) by a common denominator, we get
\begin{eqnarray}\label{20200826-24}
\bigg(-\frac{1}{3}\prod_{\alpha,\beta\in\{1,2,3\}}(x_\alpha-y_\beta)\bigg)\uppercase\expandafter{\romannumeral1}&=&y_1(X+\frac{N}{2}y_2-y_2^3)(X+\frac{N}{2}y_3-y_3^3)\nonumber\\
&+&y_2(X+\frac{N}{2}y_3-y_3^3)(X+\frac{N}{2}y_1-y_1^3)\nonumber\\
&+&y_3(X+\frac{N}{2}y_1-y_1^3)(X+\frac{N}{2}y_2-y_2^3),
\end{eqnarray}
in which the $X^2$ terms:
\begin{equation}\label{20200826-25}
X^2(y_1+y_2+y_3)=0;
\end{equation}
the $X$ terms:
\begin{eqnarray}\label{20200826-26}
&&XN(y_1y_2+y_2y_3+y_3y_1)-X(y_1y_2^3+y_1^3y_2+y_1y_3^3+y_1^3y_3+y_2y_3^3+y_2^3y_3)\nonumber\\
&&=-\frac{1}{2}N^2X+\frac{1}{2}N^2X=0;
\end{eqnarray}
the other terms:
\begin{eqnarray}\label{20200826-27}
&&\frac{3}{4}N^2y_1y_2y_3-Ny_1y_2y_3(y_1^2+y_2^2+y_3^2)+y_1y_2y_3(y_1^2y_2^2+y_2^2y_3^2+y_3^2y_1^2)\nonumber\\
&&=\frac{3}{4}N^2Y-N^2Y+\frac{1}{4}N^2Y=0.
\end{eqnarray}
This completes the proof of (\ref{20200826-7}). Multiplying (\ref{20200826-23}) by a common denominator, we get
\begin{eqnarray}\label{20200826-28}
\bigg(-\frac{1}{3}\prod_{\alpha,\beta\in\{1,2,3\}}(x_\alpha-y_\beta)\bigg)\uppercase\expandafter{\romannumeral2}&=&(\frac{N}{3}-y_1^2)(X+\frac{N}{2}y_2-y_2^3)(X+\frac{N}{2}y_3-y_3^3)\nonumber\\
&+&(\frac{N}{3}-y_2^2)(X+\frac{N}{2}y_3-y_3^3)(X+\frac{N}{2}y_1-y_1^3)\nonumber\\
&+&(\frac{N}{3}-y_3^2)(X+\frac{N}{2}y_1-y_1^3)(X+\frac{N}{2}y_2-y_2^3),
\end{eqnarray}
in which the $X^2$ terms:
\begin{equation}\label{20200826-29}
X^2(N-y_1^2-y_2^2-y_3^2)=0;
\end{equation}
the $X$ terms:
\begin{eqnarray}\label{20200826-30}
&&\frac{1}{3}XN^2(y_1+y_2+y_3)-\frac{1}{6}XN(y_1^3+y_2^3+y_3^3)-X(y_1^2y_2^3+y_1^3y_2^2+y_1^2y_3^3+y_1^3y_3^2+y_2^2y_3^3+y_2^3y_3^2)\nonumber\\
&&=0-\frac{1}{2}NXY+\frac{1}{2}NXY=0;
\end{eqnarray}
the other terms:
\begin{eqnarray}\label{20200826-31}
&&\frac{1}{12}N^3(y_1y_2+y_2y_3+y_3y_1)\nonumber\\
&&-\frac{1}{12}N^2\Big(2(y_1y_2+y_2y_3+y_3y_1)(y_1^2+y_2^2+y_3^2)+y_1y_2y_3(y_1+y_2+y_3)\Big)\nonumber\\
&&+\frac{1}{6}N\Big(3(y_1y_2+y_2y_3+y_3y_1)(y_1^2y_2^2+y_2^2y_3^2+y_3^2y_1^2)-(y_1^3y_2^3+y_2^3y_3^3+y_3^3y_1^3)\Big)\nonumber\\
&&-y_1^2y_2^2y_3^2(y_1y_2+y_2y_3+y_3y_1)\nonumber\\
&&=-\frac{1}{24}N^4+\frac{1}{12}N^4-(\frac{1}{24}N^4+\frac{1}{2}NY^2)+\frac{1}{2}NY^2=0.
\end{eqnarray}
This completes the proof of (\ref{20200826-8}).
\end{proof}

\textbf{Acknowledgement.} {The author is supported by NNSFC 11671280, NNSFC 11822108, Fok Ying Tong Education Foundation 161002.}

\end{document}